\numberwithin{equation}{section}
\newtheorem{theorem}{Theorem}
\newtheorem{corollary}{Corollary}
\theoremstyle{definition}
\newtheorem{definition}[theorem]{Definition}
\newtheorem{examples}[theorem]{Example}
\theoremstyle{remark}
\newtheorem{remark}{Remark}
\newcommand{\RN}[1]{\uppercase\expandafter{\romannumeral#1}}
\begin{document}
	
\title[Growth Rates of Superlinear ODEs]{Growth Rates of Solutions of Superlinear Ordinary Differential Equations}
\author{John A. D. Appleby}
\address{School of Mathematical
	Sciences, Dublin City University, Glasnevin, Dublin 9, Ireland}
\email{john.appleby@dcu.ie} \urladdr{webpages.dcu.ie/\textasciitilde
	applebyj}

\author{Denis D. Patterson}
\address{School of Mathematical
	Sciences, Dublin City University, Glasnevin, Dublin 9, Ireland}
\email{denis.patterson2@mail.dcu.ie}
\urladdr{sites.google.com/a/mail.dcu.ie/denis-patterson}

\thanks{Denis Patterson is supported by the Irish Research Council grant GOIPG/2013/402.} 
\subjclass[2010]{34C11,34E10}
\keywords{Nonlinear, ordinary differential equations, superlinear, growth rates, unbounded solutions}
\date{\today}
\begin{abstract}
In this letter we obtain sharp estimates on the growth rate of solutions to a nonlinear ODE with a nonautonomous forcing term. The equation is superlinear in the state variable and hence solutions exhibit rapid growth and finite--time blow--up. The importance of ODEs of the type considered here stems from the key role they play in understanding the asymptotic behaviour of more complex systems involving delay and randomness. 
\end{abstract}

\maketitle

\section{Introduction}
We study the asymptotic behaviour of rapidly growing solutions to the nonlinear ordinary differential equation
\begin{equation}\label{eq.ODE}
x'(t) = f(x(t)) + h(t), \quad t \geq 0; \quad x(0) = \psi>0.
\end{equation}
Rapid growth, and possibly even finite--time blow--up, of solutions is ensured by assuming
\begin{equation}\label{eq.f}
f \in C((0,\infty);(0,\infty)), \,\, f \mbox{ is increasing}, \,\, x\mapsto f_1(x):=f(x)/x\mbox{ is ultimately increasing,}\,\, \lim_{x\to\infty}f_1(x)= \infty .\tag{f}
\end{equation}
Note that \eqref{eq.f} \emph{precludes $f$ being subadditive} (cf. \cite{MR1487077}). Assuming $f$ is locally Lipschitz continuous is sufficient to ensure a unique solution to \eqref{eq.ODE} and, in order to simplify matters, we do so henceforth. We also assume
\begin{equation}\label{eq.H}
h \in C((0,\infty);\mathbb{R}), \quad H(t) = \int_{0}^t h(s)\,ds \geq 0 \mbox{ for each }t\geq 0. \tag{H}
\end{equation}
While understanding the asymptotics of \eqref{eq.ODE} is undoubtedly interesting in its own right, our primary interest in \eqref{eq.ODE} stems from the key role it plays in more complex systems exhibiting rapid growth. The asymptotic behaviour of blow--up solutions of nonlinear Volterra equations, such as
\begin{equation}\label{eq.volterra}
x'(t) = \int_{0}^t w(t-s)f(x(s))\,ds + h(t), \quad t \geq 0; \quad x(0) = \psi>0,
\end{equation}
have attracted considerable attention (see \cite{brunner2012blow,malolepszy2008blow,roberts2007recent} and the references therein). Of particular interest is the behaviour of solutions to \eqref{eq.volterra} in the key limit, if explosion occurs, or for large times, if solutions are global; the results of this letter for the simpler equation \eqref{eq.ODE} are an important first step in such an analysis (see e.g.~\cite{app_patt_ejde} for sublinear equations). Similarly, the nonlinear stochastic differential equation
\begin{equation}\label{eq.SDE}
X(t) = \psi+\int_0^t f(X(s))\,ds + \int_0^t \sigma(s)\,dB(s), \quad t \geq 0,
\end{equation}
can be studied using the results of this note (see Corollary \ref{corollary.SDE} and \cite{appleby2014necessary} for analysis in the sublinear case). Finally, we remark in the case that $a$ is a positive and continuous function, the non--autonomous ODE $z'(t)=a(t)f(z(t))+h(t)$ can be analysed by similar methods, since $\tilde{x}(t)=z(A^{-1}(t))$ obeys \eqref{eq.ODE}, where 
$A(t)=\int_0^t a(s)\,ds \to \infty$ as $t\to\infty$, and $\tilde{H}(t)=H(A^{-1}(t))$. Similar time--rescaling can be applied to 
non--autonomous analogues of \eqref{eq.SDE}.

Equation \eqref{eq.ODE} can be thought of as a perturbed version of the autonomous ODE
\begin{equation}\label{eq.ODE_autonomous}
y'(t) = f(y(t)), \quad t \geq 0;\quad y(0) = \psi > 0,
\end{equation}
whose solution is given by $y(t;\psi) = F^{-1}(\psi + t)$, where
$
F(x) = \int_1^x du/f(u) \mbox{ for } x \geq 1.
$
The function $F$ plays a central role in understanding the growth rate of solutions to \eqref{eq.ODE} since solutions to \eqref{eq.ODE_autonomous} obey 
\begin{equation}\label{eq.implicit_asymptotics}
\lim_{t\to\infty}\frac{F(y(t;\psi))}{t} = 1, \quad \mbox{for each }\psi>0,
\end{equation}
giving an implicit and $\psi$--independent estimate on the rate of growth. This should, and does, yield a more robust characterisation of the growth rate, since \eqref{eq.f} implies $\lim_{t\to\infty}y(t;\psi_1)/y(t;\psi_2)= 0$ for $\psi_1<\psi_2$. We prove necessary and sufficient conditions under which solutions to \eqref{eq.ODE} retain the implicit growth property \eqref{eq.implicit_asymptotics}. However, if  $h$ is sufficiently large, in an appropriate sense, we expect the solution to \eqref{eq.ODE} to grow at a rate determined by $h$; we show that this is the case by providing sharp conditions under which $\lim_{t\to\infty}x(t)/H(t) = 1$. 

This note is closely related to the vast literature on growth bounds of solutions of nonlinear differential and integral equations and inequalities (see e.g. \cite{MR1487077,agarwal2005generalization,bihari1956generalization,pinto1990integral}). However, it seems that applying the analysis of germane works in this area (e.g., \cite{MR1177923,MR1081393,MR0372135}) leads to weaker asymptotic results than we present here. In contrast to these works, our approach is a mixture of constructive comparison arguments (cf. e.g. \cite{appleby2014necessary}) and asymptotic integration methods (cf. e.g. \cite{app_patt_ejde,H}). Of course, since such works contend with more general problems under weaker assumptions, and establish global growth bounds, we should expect here to obtain sharper results under additional restrictions. We note that the monotonicity of $f_1$ implies $f$ obeys the reverse inequality to members of the class of functions $\mathcal{F}$, whose utility has been extensively exploited in the past (see \cite[Section 2.5]{MR1487077}).
\section{Main Results}
As is well--known, solutions to \eqref{eq.ODE} will be well--defined on $\mathbb{R}^+$ if and only if $\lim_{x\to\infty}F(x) = + \infty$. In the case when $\lim_{x\to\infty}F(x)<\infty$, the asymptotics of the solution to \eqref{eq.ODE} are given by the following result.
\begin{theorem} 
	Suppose $f \in C((0,\infty);(0,\infty))$ is increasing, $\lim_{x\to\infty}F(x)<\infty$ and \eqref{eq.H} holds. Then there is a $T \in (0,\infty)$ such that the solution to \eqref{eq.ODE} obeys $\lim_{t\to T^-}x(t) = \infty$, and  
	$\lim_{t\to T^-} (T-t)^{-1}\int_{x(t)}^\infty du/f(u)= 1$.
\end{theorem}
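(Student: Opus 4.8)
The plan is to establish finite-time blow-up by a comparison argument against the unperturbed equation \eqref{eq.ODE_autonomous}, then upgrade this to a genuine limit $x(t)\to\infty$, and finally read off the precise rate by differentiating $F$ along the solution. Throughout write $F_\infty:=\lim_{x\to\infty}F(x)\in(0,\infty)$ and $G(x):=F_\infty-F(x)=\int_x^\infty du/f(u)$, so that $G$ is positive, strictly decreasing, with $G(x)\to 0$ as $x\to\infty$; note also $f(x)\to\infty$ as $x\to\infty$, since otherwise $F_\infty$ would be infinite.

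For the blow-up, the key is to exploit $H\ge 0$ rather than any sign condition on $h$ itself. On the maximal interval of existence $[0,T)$, \eqref{eq.ODE} and \eqref{eq.H} give $x(t)\ge z(t):=\psi+\int_0^t f(x(s))\,ds$, and since $z(0)=\psi$ and $z'(t)=f(x(t))\ge f(z(t))$ (using that $f$ is increasing and $x\ge z$), the function $z$ is a supersolution of \eqref{eq.ODE_autonomous}. Integrating $\tfrac{d}{dt}F(z(t))=z'(t)/f(z(t))\ge 1$ yields $F(z(t))\ge F(\psi)+t$, hence $z(t)\ge F^{-1}(F(\psi)+t)=y(t;\psi)$; since $F^{-1}(s)\to\infty$ as $s\to F_\infty^-$, the bound $x(t)\ge y(t;\psi)$ forces $T\le F_\infty-F(\psi)<\infty$, while local existence gives $T>0$.

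To see that $x(t)\to\infty$ as $t\to T^-$ and not merely $\limsup_{t\to T^-}x(t)=\infty$, note that $H$ is bounded on the compact interval $[0,T]$ and $x(t)=z(t)+H(t)$ with $z$ nondecreasing. If $z$ stayed bounded on $[0,T)$ then so would $x$, and since $f$ is locally Lipschitz and $h$ is continuous at $T$, the solution could then be continued past $T$, contradicting maximality. Hence $z(t)\to\infty$, so $x(t)\to\infty$ and $f(x(t))\to\infty$ as $t\to T^-$.

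Finally, for the rate, differentiate along the solution: $\tfrac{d}{dt}G(x(t))=-x'(t)/f(x(t))=-1-h(t)/f(x(t))$ on $(0,T)$; integrating from $t$ to $s$ and letting $s\to T^-$ (so $G(x(s))\to 0$) gives
\[
G(x(t)) \;=\; (T-t) \;+\; \int_t^T \frac{h(r)}{f(x(r))}\,dr .
\]
It remains to show the integral is $o(T-t)$. Since $T<\infty$, $h$ is bounded on $[T/2,T]$, say $|h|\le K$ there, while $m(t):=\inf_{r\in[t,T)}f(x(r))\ge f\big(\inf_{r\in[t,T)}x(r)\big)\to\infty$ as $t\to T^-$; hence the integral is bounded in modulus by $K(T-t)/m(t)$ for $t$ near $T$, and dividing the displayed identity by $T-t$ and letting $t\to T^-$ yields $\lim_{t\to T^-}G(x(t))/(T-t)=1$, as claimed. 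The only genuinely delicate points are running the comparison despite $h$ possibly changing sign — handled by using $H\ge 0$ in the integrated equation instead of a pointwise bound on $h$ — and ensuring the error term is $o(T-t)$ rather than merely $o(1)$, which hinges on $f(x(t))\to\infty$ together with the automatic boundedness of $h$ near the finite blow-up time $T$.
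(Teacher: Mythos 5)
The paper states this theorem without proof, so there is no in-house argument to compare against; your blind proof is correct and self-contained. The three-step decomposition --- comparison with the unperturbed equation (via $H\geq 0$ in the integrated ODE) to force $T\leq F_\infty - F(\psi)<\infty$, a continuation/compactness argument to upgrade unboundedness to the genuine limit $\lim_{t\to T^-}x(t)=\infty$, and direct integration of $\tfrac{d}{dt}\int_{x(t)}^\infty du/f(u)=-1-h(t)/f(x(t))$ to extract the rate --- is the natural route and each step is justified. The two delicate points you flag are handled correctly: $H\geq 0$ is the right hypothesis to exploit since $h$ may change sign, and the error term $\int_t^T h(r)/f(x(r))\,dr$ is indeed $o(T-t)$ because $h$ is automatically bounded near the finite time $T$ while $f(x(t))\to\infty$, the latter following from monotonicity of $f$ together with $\int_1^\infty du/f(u)<\infty$. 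One small point worth making explicit: the continuation argument requires $x$ to remain in a compact subset of the domain $(0,\infty)$ near $T$, which follows from $x(t)\geq z(t)\geq\psi>0$; you establish this lower bound earlier but do not explicitly invoke it when citing the continuation theorem. Note also that the paper's shorthand $y(t;\psi)=F^{-1}(\psi+t)$ should, strictly, read $F^{-1}(F(\psi)+t)$, which is the form you correctly use in deducing $T\leq F_\infty-F(\psi)$.
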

From this point on suppose $\lim_{x\to\infty}F(x) = \infty$, so solutions to \eqref{eq.ODE} are defined on $\mathbb{R}^+$. In  statements of subsequent results, $x$ is the unique continuous solution to \eqref{eq.ODE}, and this is henceforth omitted.  
\begin{theorem}\label{thm.lim_preserved}
	Suppose \eqref{eq.f} and \eqref{eq.H} hold. Then the following are equivalent:
	\[
	(i.)\quad \limsup_{t\to\infty}\frac{F(H(t))}{t} \in [0,1], \qquad (ii.)\quad \lim_{t\to\infty}\frac{F(x(t))}{t} = 1.
	\]
\end{theorem}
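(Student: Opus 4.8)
The plan is to prove each implication by a comparison argument, exploiting the fact that $F$ is increasing and concave-like (because $f$ is increasing, $1/f$ is decreasing, so $F$ is concave on the range where $f$ is increasing), together with the superlinearity encoded in $f_1(x) = f(x)/x \to \infty$.

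First I would set up the basic comparison. Writing $x'(t) = f(x(t)) + h(t)$ and integrating against the solution $y(t;\psi) = F^{-1}(\psi+t)$ of the unforced equation \eqref{eq.ODE_autonomous}, one gets immediately from $H(t)\ge 0$ that $x(t) \ge y(t;\psi)$ for all $t\ge 0$, hence $\liminf_{t\to\infty} F(x(t))/t \ge 1$ unconditionally. So the whole content is the matching upper bound $\limsup_{t\to\infty} F(x(t))/t \le 1$, and that is exactly where hypothesis $(i.)$ must be used. For the direction $(ii.)\Rightarrow(i.)$, I would argue by contradiction: integrating the equation gives $x(t) = \psi + \int_0^t f(x(s))\,ds + H(t) \ge \psi + H(t)$, so $F(x(t)) \ge F(\psi + H(t))$ once $H(t)$ is large, and since $F$ is increasing this forces $\limsup_{t\to\infty} F(H(t))/t \le \limsup_{t\to\infty} F(x(t))/t = 1$; the lower bound $\ge 0$ is automatic since $F(H(t))$ may be taken nonnegative (or handle small $H$ separately). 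So $(ii.)\Rightarrow(i.)$ is the easy direction.

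The substance is $(i.)\Rightarrow(ii.)$, i.e. deriving the upper estimate $\limsup F(x(t))/t \le 1$ from $\limsup F(H(t))/t \le 1$. Here I would fix $\varepsilon>0$ and, using $(i.)$, pick $T_\varepsilon$ so that $F(H(t)) \le (1+\varepsilon)t$ for $t \ge T_\varepsilon$, i.e. $H(t) \le F^{-1}((1+\varepsilon)t) = y(t;\,\varepsilon t\text{-shifted})$. The idea is to build a supersolution of the form $z(t) = F^{-1}(\lambda t + c)$ for a suitable $\lambda$ slightly bigger than $1$: one needs $z'(t) = \lambda/F'(z(t)) = \lambda f(z(t)) \ge f(z(t)) + h(t)$, which after integrating is essentially the requirement $(\lambda-1)\int_0^t f(z(s))\,ds \ge H(t)$. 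Since $z$ itself grows like $F^{-1}(\lambda t)$, one has $\int_0^t f(z(s))\,ds \approx z(t) \approx F^{-1}(\lambda t)$, while $(i.)$ gives $H(t) \lesssim F^{-1}((1+\varepsilon)t)$; choosing $\lambda = 1 + 2\varepsilon$ (say) and using monotonicity of $F^{-1}$ should make the supersolution inequality hold for large $t$, provided one also absorbs the behaviour on the initial interval via the constant $c$. A cleaner route, avoiding differentiating $F^{-1}$, is to work directly with the integral inequality: set $u(t) = F(x(t))$, so $u'(t) = x'(t)/f(x(t)) = 1 + h(t)/f(x(t))$, and integrate to get $u(t) = \psi' + t + \int_0^t h(s)/f(x(s))\,ds$. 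One then needs to show $\int_0^t h(s)/f(x(s))\,ds = o(t)$, or more precisely that its $\limsup$-contribution is controlled; splitting $\int_0^t h(s)/f(x(s))\,ds$ by an integration by parts, $= H(t)/f(x(t)) + \int_0^t H(s) f'(x(s))x'(s)/f(x(s))^2\,ds$, and bounding $H(t) \le F^{-1}((1+\varepsilon)t)$ while using $x(t) \ge y(t;\psi)$ from the lower bound, one reduces everything to an estimate on $F^{-1}((1+\varepsilon)t)/f(F^{-1}(t))$, which tends to $0$ precisely because $f_1 \to \infty$ makes $F^{-1}$ grow only ``logarithmically slowly'' relative to $f$.

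The main obstacle I anticipate is making the last step rigorous: quantifying that $\limsup F(H(t))/t \le 1$ translates into $H(t)$ being negligible compared to $\int_0^t f(x(s))\,ds$, because this comparison is not pointwise-monotone-friendly — $h$ can oscillate in sign, so one cannot simply compare $x$ with a forced autonomous problem having forcing $|h|$. The resolution should be to use $H \ge 0$ and argue with $H$ rather than $h$ throughout (integration by parts to move all estimates onto $H$), and to exploit the superlinearity quantitatively via the inequality $F(2x) - F(x) = \int_x^{2x} du/f(u) \le x/f(x) = 1/f_1(x) \to 0$, which says $F$ is slowly varying-like and hence $F^{-1}$ grows faster than any iterate — this is the precise mechanism by which a forcing term that is merely ``$F$-subcritical'' gets dominated by the nonlinearity. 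I would also need to be slightly careful near $t=0$ where $x$ may be small and $f(x(t))$ not yet large; this is handled by noting $x(t) \to \infty$ (from $x \ge y(t;\psi)$ and $F^{-1}(\psi+t)\to\infty$) and restricting all asymptotic estimates to $t$ large.
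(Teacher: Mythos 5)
The easy direction $(ii.)\Rightarrow(i.)$ and the unconditional lower bound $\liminf_{t\to\infty}F(x(t))/t\geq 1$ are handled exactly as the paper does, and your diagnosis of where the difficulty sits ($h$ can change sign, so only $H$ can be controlled) is also correct. But both routes you propose for the hard direction $(i.)\Rightarrow(ii.)$ break down at the key step.

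Your first route---a supersolution $z(t)=F^{-1}(\lambda t+c)$---requires the \emph{pointwise} inequality $(\lambda-1)f(z(t))\geq h(t)$, and as you yourself note, a bound on $H$ says nothing about the size of $h$ at any instant; ``integrating'' the needed inequality does not rescue the comparison, because the comparison principle for first-order scalar ODEs needs the differential inequality, not its integral. Your second route hinges on the claim that $F^{-1}((1+\epsilon)t)/f(F^{-1}(t))\to 0$, and this is simply false under \eqref{eq.f}. Take $f(x)=x\log x$, so that $F(x)\sim\log\log x$ and $F^{-1}(t)=\exp(\exp(t))$, $f(F^{-1}(t))=\exp(\exp(t))\exp(t)$; then $F^{-1}((1+\epsilon)t)/f(F^{-1}(t))=\exp\bigl(\exp((1+\epsilon)t)-\exp(t)-t\bigr)\to\infty$, not $0$. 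The intuition that ``$F$ slowly varying $\Rightarrow$ $F^{-1}$ grows slowly'' is backwards: $F$ slowly varying means $F^{-1}$ is \emph{rapidly} varying, and $F^{-1}((1+\epsilon)t)$ typically dwarfs $f(F^{-1}(t))$. (The integration by parts also requires $f\in C^1$, which \eqref{eq.f} does not assume.) So the boundary term in your integration by parts is not $o(t)$, and the argument collapses.

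What makes the paper's argument work is a different move: rather than comparing $x$ with a supersolution driven by $h$ or estimating $\int h/f(x)$, the paper replaces $H(t)$ (using $H(t)<F^{-1}(K(1+\epsilon)t)$) by the explicit bound and then \emph{differentiates the bound} to obtain a surrogate forcing $K(1+\epsilon)(f\circ F^{-1})(K(1+\epsilon)t)$. The upper comparison $x_+$ in \eqref{eq.x_+'_prime} satisfies an ODE whose forcing is already of the form $f(\cdot)$, so that $x_+'/f(x_+)-1$ is a \emph{ratio of $(f\circ F^{-1})$ at two different times}, and it is precisely that kind of ratio which Remark~\ref{remark.superexp} shows tends to zero. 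In short, the quantity that is genuinely small is $(f\circ F^{-1})(at)/(f\circ F^{-1})(bt)$ for $a<b$, not $F^{-1}(at)/f(F^{-1}(bt))$; your proposal estimates the wrong ratio, and the former cannot be recovered from the latter.

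Two smaller points. First, the constant in the supersolution must absorb $\sup_{[0,T]}f(x)$ on the initial interval where the bound on $H$ is not yet in force; this is present implicitly in your sketch but needs to be made explicit in the $x_+(T)$ initial condition. Second, for the converse direction you should say a word about why $\limsup F(H(t))/t\geq 0$ is automatic: if $H$ is unbounded then $F(H(t_n))\to+\infty$ along some sequence, while if $H$ is bounded then $F(H(t))/t\to 0$; either way the $\limsup$ is nonnegative, so once $\leq 1$ is shown the conclusion $\in[0,1]$ follows.
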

\begin{proof}[Proof of Theorem \ref{thm.lim_preserved}]
Integrate \eqref{eq.ODE} to obtain 
	$
	x(t) = x(0) + H(t) + \int_0^t f(x(s))\,ds, \, t \geq 0.
	$
	Hence, by \eqref{eq.H}, $x(t) \geq x(0) + \int_0^t f(x(s))\,ds$ for each $t \geq 0$. Define the lower comparison solution
	\[
	x_-(t) = x(0)/2 + \int_0^t f(x_-(s))\,ds \mbox{ for } t \geq 0.
\]
	By construction, $x_-(t) < x(t)$ for each $t \geq 0$ and furthermore, $x_-'(t) = f(x_-(t))$ for each $t>0$. Therefore, by asymptotic integration, $\lim_{t\to\infty}F(x_-(t))/t = 1$ and hence $\liminf_{t\to\infty}F(x(t))/t \geq 1$. Now suppose $\limsup_{t\to\infty}F(H(t))/t = K \in (0,1]$, postponing temporarily the case $K=0$. Thus, for each $\epsilon>0$, there exists $T(\epsilon)>0$ such that $H(t)< F^{-1}(K(1+\epsilon)t)$ for each $t \geq T(\epsilon)$. By integrating \eqref{eq.ODE}, derive the upper bound
	\[
	x(t) < x(0) + F^{-1}(K(1+\epsilon)t) + T(\epsilon)\sup_{s \in [0,T]}f(x(s)) + \int_T^t f(x(s))\,ds, \quad t \geq T(\epsilon).
	\]
	Let $x^* = 1 + \sup_{s \in [0,T]}x(s)$ and $x_+$ be the solution of 
	\begin{equation}\label{eq.x_+'_prime}
	x_+'(t) = K(1+\epsilon)(f\circ F^{-1})(K(1+\epsilon)t) + f(x_+(t)), \quad t \geq T(\epsilon); \quad 
	x_+(T)=x^*.
	\end{equation}
	By construction, $x(t)<x_+(t) \mbox{ for }t \geq T(\epsilon)$. As $\liminf_{t\to\infty}F(x(t))/t \geq 1$, there is $T_1(\epsilon)>T(\epsilon)$ such that $x_+(t) > x(t) > F^{-1}((1+2\epsilon)t) \mbox{ for }t \geq T_1(\epsilon)$. Thus
	$
	1/f(x_+(t)) < 1/(f \circ F^{-1})((1+2\epsilon)t) \mbox{ for } t \geq T_1(\epsilon),
	$
	and  
	\begin{equation}\label{eq.est_upper_x_+}
	\frac{x_+'(t)}{f(x_+(t))} < 1 + K(1+\epsilon)\frac{(f \circ F^{-1})(K(1+\epsilon)t)}{(f \circ F^{-1})((1+2\epsilon)t)}, \quad t \geq T_1(\epsilon).
	\end{equation}
	\begin{remark}\label{remark.superexp}
	Since limits of the following type arise frequently, we pause to remark that 
		\begin{equation}\label{eq.superexp}
		\lim_{t\to\infty}\frac{(f \circ F^{-1})((1-\epsilon)t)}{(f \circ F^{-1})(t)} = 0, \quad \mbox{for each }\epsilon\in (0,1),
		\end{equation}
		under \eqref{eq.f}, and we now give a proof of \eqref{eq.superexp}. By L'H\^{o}pital's rule,
		$
		\lim_{t\to\infty}\tfrac{d}{dt}F^{-1}(t)/F^{-1}(t) = \lim_{t\to\infty}(f \circ F^{-1})(t)/F^{-1}(t) = \infty. 
		$ 
		Integrating this asymptotic relation from $t-\eta$ to $t$ for $t$ sufficiently large gives 
		\begin{equation}\label{eq.lagged_limit_zero}
		\lim_{t\to\infty}\frac{F^{-1}(t - \eta)}{F^{-1}(t)} = 0, \quad \mbox{for each }\eta>0.
		\end{equation}
		For each \emph{fixed} $\epsilon\in (0,1)$ and $t$ sufficiently large, $t - 1 > (1-\epsilon)t$. Hence, letting $\eta = 1$ in \eqref{eq.lagged_limit_zero},
		$
		0 = \lim_{t\to\infty}F^{-1}(t-1)/F^{-1}(t) \geq \lim_{t\to\infty}F^{-1}((1-\epsilon)t)/F^{-1}(t), \mbox{ for each }\epsilon \in (0,1).
		$
		Now note that $x \mapsto f(x)/x$ being ultimately increasing implies
		$
		(f \circ F^{-1})((1-\epsilon)t)/F^{-1}((1-\epsilon)t) < (f \circ F^{-1})(t)/F^{-1}(t),  
		$ for $t$ large enough. But this is equivalent to 
		\[
		\frac{(f \circ F^{-1})((1-\epsilon)t)}{(f \circ F^{-1})(t)} < \frac{F^{-1}((1-\epsilon)t)}{F^{-1}(t)}, 
		\]
		and letting $t \to \infty$ yields the desired conclusion.
	\end{remark}
	From Remark \ref{remark.superexp} and \eqref{eq.est_upper_x_+}, we have
	$
	\limsup_{t\to\infty}x_+'(t)/f(x_+(t)) \leq 1.
	$
	Asymptotic integration now yields $\limsup_{t\to\infty}F(x_+(t))/t \leq 1$ and therefore $\limsup_{t\to\infty}F(x(t))/t \leq 1$, as required. The case $K = 0$ can be dealt with as above by replacing $F^{-1}(K(1+\epsilon)t)$ by $F^{-1}(\epsilon t)$ as appropriate.
	
	Conversely, $H(t)<x(t)$ for each $t \geq 0$. Hence
	$
	\limsup_{t\to\infty}F(H(t))/t \leq \lim_{t\to\infty}F(x(t))/t = 1.
	$
\end{proof}
\begin{theorem}\label{thm.limsup_big}
	Suppose \eqref{eq.f} and \eqref{eq.H} hold. Then the following are equivalent:
	\[
	(i.)\quad \limsup_{t\to\infty}\frac{F(H(t))}{t} = K \in (1,\infty), \qquad (ii.)\quad \limsup_{t\to\infty}\frac{F(x(t))}{t} = K \in (1,\infty).
	\]
\end{theorem}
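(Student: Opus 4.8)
The plan is to prove the two implications separately, the reverse one $(ii.)\Rightarrow(i.)$ being a short consequence of the forward one together with Theorem~\ref{thm.lim_preserved}, so the substance lies in $(i.)\Rightarrow(ii.)$.

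For $(i.)\Rightarrow(ii.)$ I would first dispose of the lower estimate: integrating \eqref{eq.ODE} and using \eqref{eq.H} and positivity of $f$ gives $x(t)>H(t)$, and since $F$ is increasing onto $[0,\infty)$, along a sequence $t_n\to\infty$ realising $\limsup_{t\to\infty}F(H(t))/t=K$ one has $H(t_n)\to\infty$ and hence $F(x(t_n))>F(H(t_n))$ eventually; dividing by $t_n$ and letting $n\to\infty$ gives $\limsup_{t\to\infty}F(x(t))/t\geq K$. For the matching upper estimate I would reuse the comparison construction from the proof of Theorem~\ref{thm.lim_preserved} essentially verbatim: writing $K_\epsilon:=K(1+\epsilon)$, use $(i.)$ to choose $T(\epsilon)$ with $H(t)<F^{-1}(K_\epsilon t)$ for $t\geq T(\epsilon)$, let $x_+$ solve \eqref{eq.x_+'_prime} with $K_\epsilon$ in place of the constant there, so that $x(t)<x_+(t)$ for $t\geq T(\epsilon)$, and put $v(t):=F(x_+(t))$. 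Using $(F^{-1})'=f\circ F^{-1}$ this yields, for $t\geq T(\epsilon)$,
\[
v'(t)=1+K_\epsilon\,\frac{(f\circ F^{-1})(K_\epsilon t)}{(f\circ F^{-1})(v(t))}.
\]

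The step I expect to be the main obstacle is extracting a sharp bound $\limsup_{t\to\infty}v(t)/t\leq K_\epsilon$ from this identity. The difficulty, and the point of departure from Theorem~\ref{thm.lim_preserved}, is that since $K_\epsilon>1$ the naive lower bound $x_+(t)\gtrsim F^{-1}(K_\epsilon t)$ makes the two arguments of $f\circ F^{-1}$ comparable, so their quotient need not be small and \eqref{eq.superexp} cannot be invoked directly. My remedy would be a barrier argument at the slightly higher level $K_\epsilon+\delta$, for arbitrary $\delta>0$: whenever $v(t)\geq(K_\epsilon+\delta)t$, monotonicity of $f\circ F^{-1}$ and \eqref{eq.superexp} applied with the ratio $K_\epsilon/(K_\epsilon+\delta)\in(0,1)$ force $K_\epsilon(f\circ F^{-1})(K_\epsilon t)/(f\circ F^{-1})(v(t))<\delta/2$ for all large $t$, hence $v'(t)<1+\delta/2$; and since $K_\epsilon>1$ we have $1+\delta/2<K_\epsilon+\delta$. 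Thus $t\mapsto v(t)-(K_\epsilon+\delta)t$ has derivative bounded above by the negative constant $1-K_\epsilon-\delta/2$ whenever it is nonnegative, and negative derivative at any of its zeros, so it is eventually $\leq 0$; that is, $\limsup_{t\to\infty}v(t)/t\leq K_\epsilon+\delta$. Letting $\delta\downarrow0$, then $\epsilon\downarrow0$, and recalling $x<x_+$, I obtain $\limsup_{t\to\infty}F(x(t))/t\leq K$. It is exactly here that $K>1$ is used: it is what renders the constant term ``$1$'' in $v'$ negligible against the slope $K_\epsilon+\delta$ of the barrier.

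For $(ii.)\Rightarrow(i.)$, the observation is that $x(t)>H(t)$ and monotonicity of $F$ give $L:=\limsup_{t\to\infty}F(H(t))/t\leq K<\infty$, while $L\geq 0$ because a bounded $H$ would, via the autonomous lower comparison of Theorem~\ref{thm.lim_preserved}, force $\lim_{t\to\infty}F(x(t))/t=1\neq K$. If $L\leq1$ then Theorem~\ref{thm.lim_preserved} gives $\lim_{t\to\infty}F(x(t))/t=1\neq K$, which is impossible; hence $L\in(1,\infty)$, and the already-established implication $(i.)\Rightarrow(ii.)$ (with $K$ replaced by $L$) gives $\limsup_{t\to\infty}F(x(t))/t=L$, whence $L=K$. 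This closes the equivalence.
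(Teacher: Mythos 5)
Your proof is correct, and the forward implication $(i.)\Rightarrow(ii.)$ rests on the same key ingredients as the paper's: the trivial lower bound from $x>H$, the comparison solution $x_+$ of \eqref{eq.x_+'_prime}, the superexponential limit \eqref{eq.superexp} of Remark~\ref{remark.superexp}, and the essential use of $K>1$ to make the additive ``$1$'' in the identity for $F(x_+)'$ negligible. The only genuine difference is in how the upper bound is extracted from this setup. The paper constructs an explicit supersolution $x_u(t)=F^{-1}\left(K(1+2\epsilon)(t-T_1)+F^*\right)$, chooses $F^*$ to force $x_u(T_1)>x_+(T_1)$ and the inequality \eqref{eq.f_circ_F_inv_est}, and verifies the strict differential inequality $x_u'>f(x_u)+K(1+\epsilon)(f\circ F^{-1})(K(1+\epsilon)t)$ via \eqref{eq.key_zero_limit}. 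You instead pass to $v=F(x_+)$, write $v'(t)=1+K_\epsilon(f\circ F^{-1})(K_\epsilon t)/(f\circ F^{-1})(v(t))$, and run a barrier argument at the level $(K_\epsilon+\delta)t$: whenever $v(t)\geq(K_\epsilon+\delta)t$ and $t$ is large, monotonicity of $f\circ F^{-1}$ and \eqref{eq.superexp} (applied with $\epsilon'=\delta/(K_\epsilon+\delta)$) give $v'(t)<1+\delta/2<K_\epsilon+\delta$, so $v(t)-(K_\epsilon+\delta)t$ has a strictly negative derivative wherever it is nonnegative and is therefore eventually nonpositive. This yields $\limsup_{t\to\infty}F(x(t))/t\leq K_\epsilon+\delta$, and sending $\delta\downarrow0$, $\epsilon\downarrow0$ finishes. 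Both are comparison arguments against the same affine profile in $F$--coordinates; your barrier phrasing avoids the initial-value bookkeeping needed for $x_u$ at the cost of carrying the extra parameter $\delta$. The converse $(ii.)\Rightarrow(i.)$ is by elimination using Theorem~\ref{thm.lim_preserved} and the forward implication, exactly as in the paper.
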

\begin{proof}[Proof of Theorem \ref{thm.limsup_big}]
	First suppose $(i.)$ holds. Of course, $x(t) > H(t)$ for each $t \geq 0$, so we immediately have $\limsup_{t\to\infty}F(x(t))/t \geq \limsup_{t\to\infty}F(H(t)) = K$. By hypothesis, there exists $T(\epsilon)>0$ such that $H(t) < F^{-1}(K(1+\epsilon)t)$ for each $t \geq T(\epsilon)$. Follow the proof of Theorem \ref{thm.lim_preserved} to the definition of $x_+$ in \eqref{eq.x_+'_prime}. 
	Recalling Remark \ref{remark.superexp}, we have
	$
	\lim_{t\to\infty}(f \circ F^{-1})(K(1+\epsilon)t) / (f \circ F^{-1})(K(1+2\epsilon)t) = 0.
	$
	Thus there is $T_1(\epsilon)>T(\epsilon)$ such that
	\begin{equation}\label{eq.key_zero_limit}
	\frac{(f \circ F^{-1})(K(1+\epsilon)t)}{(f \circ F^{-1})(K(1+2\epsilon)t)}< \frac{2 \epsilon}{K(1+\epsilon)}, \quad t \geq T_1(\epsilon),
	\end{equation}
	for each $\epsilon\in (0,1)$. 	Let $x_u(t) = F^{-1}\left( K(1+2\epsilon)(t-T_1) + F^* \right)$ for $t \geq T_1(\epsilon)$ with $\bar{x}= x_+(T_1)$ and $F^*=1+\max\left(F(\bar{x}), KT_1(1+2\epsilon)\right)$. Our choices ensure $x_u(T_1)>x_+(T_1)>x(T_1)$ and 
	\begin{equation}\label{eq.f_circ_F_inv_est}
	(f \circ F^{-1})\left(  K(1+2\epsilon)(t-T_1) + F^* \right) > (f \circ F^{-1})\left( K(1+2\epsilon)t \right), \quad t \geq T_1(\epsilon).
	\end{equation}
	Now, using $K>1$, \eqref{eq.key_zero_limit}, and \eqref{eq.f_circ_F_inv_est}, we deduce that $x_u'(t)=K(1+2\epsilon)f(x_u(t))$ and moreover that 
	\begin{multline*}
	x_u'(t) - f(x_u(t)) -K(1+\epsilon) (f \circ F^{-1})(K(1+\epsilon)t) > \\ 2\epsilon\,(f \circ F^{-1})(K(1+2\epsilon)t) - K(1+\epsilon)(f \circ F^{-1})(K(1+\epsilon)t) > 0,
	\end{multline*}
	for each $t \geq T_1(\epsilon)$. Therefore
	$
	x_u'(t) > f(x_u(t)) + K(1+\epsilon) (f \circ F^{-1})(K(1+\epsilon)t) \mbox{ for } t \geq T_1(\epsilon),
	$
	and it follows from \eqref{eq.x_+'_prime} that $x(t) < x_+(t) < x_u(t)$ for each $t \geq T_1(\epsilon)$. Hence
	$	F(x(t)) < F(x_u(t)) = K(1+2\epsilon)(t-T_1)+F^*$.
	Dividing across by $t$, letting $t \to \infty$ and then $\epsilon \to 0^+$ in this inequality yields 
	$\limsup_{t\to\infty}F(x(t))/t \leq K$, as desired.
	
	Conversely, suppose $\limsup_{t\to\infty}F(x(t))/t = K > 1$. Since $x(t)>H(t)$ for each $t \geq 0$, it immediately follows that $\limsup_{t\to\infty}F(H(t))/t \leq K$. If $\limsup_{t\to\infty}F(H(t))/t = K^* \in (1,K)$, then the argument above can be repeated to show that $\limsup_{t\to\infty}F(x(t))/t \leq K^*$, a contradiction. If $K^* \in [0,1]$, the argument of Theorem \ref{thm.lim_preserved} similarly produces a contradiction. Therefore  $\limsup_{t\to\infty}F(H(t))/t = K$, as claimed.
\end{proof}
\begin{corollary}\label{thm.lim_big}
	Suppose \eqref{eq.f} and \eqref{eq.H} hold. Then
	\begin{equation}\label{thm.lim_big_conclusion}
	\lim_{t\to\infty}\frac{F(H(t))}{t} = K \in (1,\infty) \quad\mbox{implies}\quad\lim_{t\to\infty}\frac{F(x(t))}{t} = K.
	\end{equation}
\end{corollary}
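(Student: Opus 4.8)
The plan is to obtain the corollary almost immediately from Theorem \ref{thm.limsup_big} together with the elementary pointwise comparison $x(t) > H(t)$, so that the only real work has already been done in the proof of that theorem. First I would observe that if $\lim_{t\to\infty} F(H(t))/t = K \in (1,\infty)$, then in particular $\limsup_{t\to\infty} F(H(t))/t = K \in (1,\infty)$, so hypothesis $(i.)$ of Theorem \ref{thm.limsup_big} is met and its conclusion gives at once
\[
\limsup_{t\to\infty}\frac{F(x(t))}{t} = K.
\]
This disposes of the upper bound, which is the substantive half of the statement.

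For the matching lower bound I would integrate \eqref{eq.ODE} and use \eqref{eq.H} together with the positivity of $f$ from \eqref{eq.f} to get
\[
x(t) = x(0) + H(t) + \int_0^t f(x(s))\,ds > H(t), \quad t \geq 0.
\]
Since $F(H(t))/t \to K > 0$ forces $F(H(t)) \to \infty$, and hence $H(t)\to\infty$, for all sufficiently large $t$ both $H(t)$ and $x(t)$ exceed $1$, so monotonicity of $F$ yields $F(x(t)) \geq F(H(t))$. Dividing by $t$ and passing to the liminf gives
\[
\liminf_{t\to\infty}\frac{F(x(t))}{t} \;\geq\; \lim_{t\to\infty}\frac{F(H(t))}{t} = K.
\]

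Combining the two displays, $K \leq \liminf_{t\to\infty} F(x(t))/t \leq \limsup_{t\to\infty} F(x(t))/t = K$, so the limit exists and equals $K$, which is \eqref{thm.lim_big_conclusion}. There is essentially no obstacle here beyond recognising that Theorem \ref{thm.limsup_big} already supplies the hard one-sided estimate and that the reverse inequality is free from the monotonicity of $F$ and the trivial bound $x > H$; the corollary is really just the observation that an existing limit of $F(H(t))/t$ upgrades the $\limsup$ equivalence of Theorem \ref{thm.limsup_big} to a genuine limit for $F(x(t))/t$.
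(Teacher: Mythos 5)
Your proof is correct and is exactly the argument the paper leaves implicit: invoke Theorem~\ref{thm.limsup_big} to obtain $\limsup_{t\to\infty}F(x(t))/t = K$, then use $x(t) > H(t)$ together with the monotonicity of $F$ to get the matching $\liminf$, upgrading the $\limsup$ equivalence to a genuine limit. Nothing to add.
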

A trivial lower bound shows that \eqref{thm.lim_big_conclusion} holds when $K=+\infty$, but this does not provide precise information on the rate of growth of $x$. The next result demonstrates that a condition implying 
$\lim_{t\to\infty} F(H(t))/t=+\infty$, and which yields a sharp characterisation of the growth rate is,
\begin{equation}\label{eq.int_f(H)_over_H}
\lim_{t\to\infty}\frac{\int_0^t f(H(s))\,ds}{H(t)} = 0.
\end{equation}
The condition $H'(t)/f(H(t))\to\infty$ as $t\to\infty$ yields \eqref{eq.int_f(H)_over_H}, can be easier to check, and can also be thought of as the limit (as $K\to\infty$) of the condition $H'(t)/f(H(t))\to K\in [0,\infty)$ as $t\to\infty$. This last condition yields $F(H(t))/t\to K$ as $t\to\infty$, which is the type of condition needed in Theorem~\ref{thm.limsup_big} and Corollary~\ref{thm.lim_big}.
 
\begin{theorem}\label{x_over_H_to_1}
Suppose \eqref{eq.f} and \eqref{eq.H} hold, and that $H$ is asymptotic to an increasing function $\tilde{H}$. Then \[
\lim_{t\to\infty}\frac{\int_0^t f(K \tilde{H}(s))\,ds}{\tilde{H}(t)} = 0\,\mbox{ for some }K> 1 \quad\mbox{ implies }\quad \lim_{t\to\infty}\frac{x(t)}{H(t)}=1.
\]
Conversely, $\lim_{t\to \infty}x(t)/H(t) = 1$ implies \eqref{eq.int_f(H)_over_H}.
\end{theorem}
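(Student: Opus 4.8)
I would first dispose of the converse, which is elementary. Integrating \eqref{eq.ODE} gives $\int_0^t f(x(s))\,ds=x(t)-\psi-H(t)$, so if $x(t)/H(t)\to1$ — which forces $H(t)\to\infty$, since otherwise $x$ grows at least linearly and $x/H\to\infty$ — then $\int_0^t f(x(s))\,ds=o(H(t))$; as $x(t)>H(t)$ and $f$ is increasing, $\int_0^t f(H(s))\,ds\le\int_0^t f(x(s))\,ds+O(1)=o(H(t))$, which is exactly \eqref{eq.int_f(H)_over_H}.

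For the forward implication (we may assume $\tilde H$ continuous and nondecreasing, replacing it by $t\mapsto\sup_{s\le t}H(s)$ if necessary), write $\Lambda(t):=\int_0^t f(K\tilde H(s))\,ds=o(\tilde H(t))$ — note this already forces $\tilde H(t)\to\infty$. Since $x(t)>H(t)$ and $H\sim\tilde H$ we have $\liminf_{t\to\infty}x(t)/\tilde H(t)\ge1$, so the plan is to prove $\limsup_{t\to\infty}x(t)/\tilde H(t)\le1$; with $H\sim\tilde H$ this yields $x/H\to1$. The main tool will be a barrier estimate. Fix small $\epsilon>0$ and $\mu\in(1,K)$ with $\mu+3\epsilon<K$, and suppose we are handed a large time $T$ with $(1-\epsilon)\tilde H\le H\le(1+\epsilon)\tilde H$ and $\Lambda\le\epsilon\tilde H$ on $[T,\infty)$, with $\psi\le(K-\mu-3\epsilon)\tilde H(t)$ for $t\ge T$, and with $x(T)\le\mu\tilde H(T)$. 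Setting $C_T:=\psi+\int_0^Tf(x(s))\,ds=x(T)-H(T)\le(\mu-1+\epsilon)\tilde H(T)$ and $\beta(t):=\psi+(1+\epsilon)\tilde H(t)+\Lambda(t)+C_T$, one checks that $\beta(T)>x(T)$ and $\beta(t)\le K\tilde H(t)$ for all $t\ge T$. A connectedness argument then shows $x\le\beta$ on $[T,\infty)$: on any interval $[T,t]$ over which $x\le\beta$ we have $x\le\beta\le K\tilde H$, hence $\int_T^tf(x(s))\,ds\le\Lambda(t)-\Lambda(T)$, and substituting into $x(t)=\psi+H(t)+\int_0^tf(x(s))\,ds$ yields the strict improvement $x(t)<\beta(t)$; thus $\{t\ge T:x\le\beta\text{ on }[T,t]\}$ is nonempty, closed, and relatively open in $[T,\infty)$, hence all of it. Since $\beta(t)=(1+\epsilon)\tilde H(t)+o(\tilde H(t))$ this gives $\limsup x/\tilde H\le1+\epsilon$, and $\epsilon\downarrow0$ finishes the forward direction.

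What remains — and what I expect to be the main obstacle — is to produce such a time $T$, i.e. to show $\liminf_{t\to\infty}x(t)/\tilde H(t)<K$. The first step here is an auxiliary fact anticipated by the remarks preceding the theorem: $\Lambda(t)=o(\tilde H(t))$ forces $F(\tilde H(t))/t\to\infty$ (equivalently, $\tilde H$ eventually dominates $F^{-1}(ct)$ for every $c>0$); proving this cleanly, using $\lim_{x\to\infty}F(x)=\infty$ together with the monotonicity of $f_1$ in \eqref{eq.f}, is the delicate part. Granting it, suppose for contradiction that $\liminf x/\tilde H\ge K$: then $x(t)\ge\lambda\tilde H(t)$ eventually for some fixed $\lambda\in(1,K)$, so $w:=x-H$ is positive and increasing with $w'=f(x)$, and for large $t$ one has $w(t)\ge(\lambda-1-\epsilon)\tilde H(t)$ and $x=w+H\le c_\lambda w$ with $c_\lambda=\lambda/(\lambda-1-\epsilon)$. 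Hence $w'(t)\le f(c_\lambda w(t))$, and integrating this Osgood-type inequality gives $F(c_\lambda w(t))\le c_\lambda t+O(1)$; since $c_\lambda w(t)\ge\lambda\tilde H(t)$ and $F(\lambda x)-F(x)\le(\lambda-1)/f_1(x)\to0$, this forces $F(\tilde H(t))\le c_\lambda t+o(1)$, contradicting $F(\tilde H(t))/t\to\infty$. (Should instead $x(t)<\lambda\tilde H(t)$ for arbitrarily large $t$, then trivially $\liminf x/\tilde H\le\lambda$.) So $\liminf x/\tilde H<K$, as required; in fact running this dichotomy for every $\lambda>1$ gives $\liminf x/\tilde H=1$, and then $x/\tilde H\to1$.
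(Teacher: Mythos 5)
Your proposal follows the same broad strategy as the paper — dispose of the converse by the integrated equation, then for the forward direction establish $\liminf x/\tilde H$ is small enough and use a barrier to control $\limsup x/\tilde H$ — but the two arguments diverge in their implementation, and it is worth comparing.

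For the upper barrier, the paper compares $x$ against the explicit function $x_+(t)=(1+4\epsilon)\tilde H(t)$ via a direct ODE comparison (using the sequence $t_n$ on which $x(t_n)/\tilde H(t_n)<1+\epsilon$, and the estimate $\int_T^t f(x_+(s))\,ds<\epsilon\tilde H(t)/2$); you instead bootstrap with the barrier $\beta(t)=\psi+(1+\epsilon)\tilde H(t)+\Lambda(t)+C_T$ and a connectedness argument. Both are correct, and yours is slightly more self-contained (no separate comparison lemma is invoked) at the cost of the extra bookkeeping around $\mu$ and $C_T$; the redundant $\psi$ in $\beta$ is harmless. For the liminf step, the paper works with $J(t)=\int_0^t f(x(s))\,ds$ and the scalar $\Lambda_\epsilon$ to derive $\limsup F(x(t))/t\le\lambda/(\lambda-1)$; you work with $w=x-H$, obtain $w'\le f(c_\lambda w)$, and integrate. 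These are cosmetically different implementations of the same Osgood-type estimate, and both reduce to a contradiction with $F(\tilde H(t))/t\to\infty$. (Note: in your contradiction the appeal to $F(\lambda x)-F(x)\to 0$ is unnecessary — monotonicity of $F$ already gives $F(\tilde H(t))\le F(\lambda\tilde H(t))\le c_\lambda t+O(1)$.) The paper separately proves the scaling fact $F(\eta t)/F(t)\to 1$ to pass from $F(K\tilde H(t))/t\to\infty$ to $F(H(t))/t\to\infty$, which you would need as well but do not mention.

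The one genuine incompleteness in your proposal is exactly the point you flag: the implication $\int_0^t f(K\tilde H(s))\,ds=o(\tilde H(t))\Rightarrow F(\tilde H(t))/t\to\infty$ is stated as granted, not proved. You should be aware that the paper does not prove it either — it is asserted in the remark preceding the theorem and then cited in the proof — so you are on equal footing with the source. Nonetheless, as it stands, both arguments rest on an unjustified (though true) auxiliary lemma; it would strengthen your write-up to supply a proof, for instance by noting that for $s\in[\sigma(t),t]$ with $\sigma(t):=\sup\{s:\tilde H(s)\le\tilde H(t)/K\}$ one has $f(K\tilde H(s))\ge f(\tilde H(t))$, so $\Lambda(t)\ge(t-\sigma(t))\,\tilde H(t)\,f_1(\tilde H(t))$, and then iterating the resulting control on $t-\sigma(t)$ — this is not trivial and is where the monotonicity of $f_1$ and $\lim F=\infty$ enter essentially.

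Your treatment of the trivial converse and the replacement of $\tilde H$ by $\sup_{s\le t}H(s)$ are both fine; the latter preserves the hypothesis with $K$ replaced by any $K'\in(1,K)$.
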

\begin{proof}[Proof of Theorem \ref{x_over_H_to_1}]
First show that $\liminf_{t\to\infty}x(t)/H(t) < \infty$; suppose instead that $\lim_{t\to\infty}H(t)/x(t) = 0$. Hence
$
\lim_{t\to\infty}x(t)/\int_0^t f(x(s))\,ds = 1.
$
Thus there exists $T(\epsilon)>0$ such that 
	$
	(1-\epsilon)\int_0^t f(x(s))\,ds < x(t) < (1+\epsilon)\int_0^t f(x(s))\,ds \mbox{ for } t \geq T(\epsilon).
	$
	Define $J(t)= \int_0^t f(x(s))\,ds$ for $t \geq 0$, so that $J'(t) = f(x(t))$ for $t>0$. Hence $J'(t) = f(x(t)) < f((1+\epsilon)J(t))$ for $t > T(\epsilon)$. Thus, for $t > T(\epsilon)$,
	\[
	F\left( (1+\epsilon)J(t) \right) - F\left( (1+\epsilon)J(T) \right) =\int_{(1-\epsilon)J(T(\epsilon))}^{(1+\epsilon)J(t)}\frac{du}{f(u)} = \int_{T(\epsilon)}^t \frac{(1+\epsilon)J'(s)}{f((1+\epsilon)J(s))}\,ds \leq (1+\epsilon)(t-T(\epsilon)).
	\]
	Since $F$ is increasing,
	$
	F(x(t)) < F\left( (1+\epsilon)J(T(\epsilon)) \right) + (1+\epsilon)(t-T(\epsilon)), \mbox{ for } t > T(\epsilon),
	$
	and hence\\ $\limsup_{t\to\infty}F(x(t))/t \leq 1$. Analogously, $\liminf_{t\to\infty}F(x(t))/t \geq 1$. Therefore $\lim_{t\to\infty}F(x(t))/t = 1$. From the remark preceding Theorem \ref{x_over_H_to_1}, $\int_0^t f( K \tilde{H}(s))\,ds/\tilde{H}(t) \to 0$ as $t\to\infty$ implies $\lim_{t\to\infty}F(K \tilde{H}(t))/t = \infty$. Let $\eta \in (0,1)$ and suppose $f_1$, as defined in \eqref{eq.f}, is increasing for each $x \geq X(\eta)>1/\eta$. Then
	\[
F(t)\geq	F(\eta t) = \int_1^{\eta t}\frac{du}{f(u)} =  \eta \int_{1/\eta}^t \frac{ds}{f(\eta s)} = \int_{1/\eta}^t \frac{\eta s}{f(\eta s)}\frac{ds}{s} \geq \int_{X(\eta)}^t \frac{ds}{f(s)} = F(t) - F(X(\eta)), \quad t \geq X(\eta). 
	\]
	Hence $\lim_{t\to\infty}F(\eta t)/F(t)= 1$, since $\lim_{x\to\infty}F(x) = \infty$. Analogous arguments work for $\eta>1$ and therefore $\lim_{t\to\infty}F(\eta t)/F(t) = 1$ for each $\eta>0$; this limit and $\lim_{t\to\infty}F(K \tilde{H}(t))/t = \infty$ imply that $\lim_{t\to\infty}F(H(t))/t = \infty$. As $x(t)>H(t)$ for $t \geq 0$, $\lim_{t\to\infty}F(x(t))/t = \infty$, a contradiction. Hence, $\liminf_{t\to\infty}x(t)/H(t) < \infty$.
	
	Next we show that $\liminf_{t\to\infty} x(t)/H(t)=1$. Suppose not: let $\lambda=\liminf_{t\to\infty} x(t)/H(t)$. Since $x(t)>H(t)$, we have $\lambda\in (1,\infty)$ by supposition. Defining $J$ as above, we get $\liminf_{t\to\infty} J(t)/x(t)=1-\frac{1}{\lambda}>0$. Hence for every $\epsilon\in (0,1)$ there is a $T(\epsilon)>0$ such that $J(t)>(1-1/\lambda)(1-\epsilon)x(t)$ for $t\geq T(\epsilon)$. 
	Define $\Lambda_\epsilon$ by $\Lambda_\epsilon(\lambda-1)(1-\epsilon)=\lambda$.
	Since $J'(t)=f(x(t))$, $J'(t)<f\left(\Lambda_\epsilon J(t)\right)$ for $t>T(\epsilon)$. 	Define $J_\epsilon(t)=\Lambda_\epsilon J(t)>x(t)$ for $t\geq T(\epsilon)$. Then 
	$J_\epsilon'(t)/f(J_\epsilon(t))<\Lambda_\epsilon$ for $t>T(\epsilon)$. Asymptotic integration now yields	$\limsup_{t\to\infty} F(J_\epsilon(t))/t \leq \Lambda_\epsilon$.
	Using the fact that $x(t)<J_\epsilon(t)$ in the last limit, and then letting $\epsilon\to 0^+$, we have $\limsup_{t\to\infty} F(x(t))/t\leq \lambda/(\lambda-1)$.

	Next, let $\epsilon\in (0,1)$ be so small that $\lambda-\epsilon>1$. Then, by supposition, there 
	is $T'(\epsilon)>0$ such that $x(t)>(\lambda-\epsilon)H(t)>H(t)$ for $t\geq T'(\epsilon)$. Hence 
	$\limsup_{t\to\infty} F(H(t))/t \leq \lambda/(\lambda-1)$. But by hypothesis, $F(H(t))/t\to \infty$ as $t\to\infty$, a contradiction. Therefore, 
	$\liminf_{t\to\infty} x(t)/H(t)=1$, as claimed.  
	
	Finally, we show that $\limsup_{t\to\infty} x(t)/H(t)\leq 1$. By hypothesis, there is a $T_1(\epsilon)>0$ such that 
	\[
	(1-\epsilon)\tilde{H}(t)<H(t)<(1+\epsilon)\tilde{H}(t), 
	 \quad\mbox{for} t\geq T_1(\epsilon).\] Furthermore, because $\liminf_{t\to\infty} x(t)/\tilde{H}(t)=1$, there exists a sequence $(t_n)_{n \geq 1}$ such that $x(t_n)/\tilde{H}(t_n) < 1+\epsilon$ for $n \geq 1$. By supposition, for every $\epsilon\in (0,1)$, there is a $T_2(\epsilon)>0$ such that
	$
	\int_0^t f(K\tilde{H}(s))\,ds < \epsilon\tilde{H}(t)/2, \mbox{ for } t \geq T_2(\epsilon),
	$
	Now, let $\epsilon>0$ be so small that $\epsilon<\min(1,(K-1)/4)$. Set $B(\epsilon)=1+4\epsilon$ and $T(\epsilon) = \min\{t_n \,:\,t_n > T_1(\epsilon)+ T_2(\epsilon) \}$. Define $x_+(t)=B(\epsilon)\tilde{H}(t)$ for $t\geq T(\epsilon)$ and notice that 
	$
	x_+(T)=(1+4\epsilon)\tilde{H}(T)>(1+\epsilon)\tilde{H}(T)>x(T).
	$
	Next,
	$\int_{T(\epsilon)}^t f(x_+(s))\,ds \leq \int_0^t f(K\tilde{H}(s))\,ds <\epsilon\tilde{H}(t)/2$ for $ t\geq T(\epsilon)$.	Using this estimate, the monotonicity of $\tilde{H}$, and $H(T)>(1-\epsilon)\tilde{H}(T)$, we obtain
	\begin{align}
	x(T) - H(T) + (1+\epsilon)\tilde{H}(t)+\int_T^t f(x_+(s))\, ds 
	\label{eq.x+c} 
	&\leq (1+7\epsilon/2) \tilde{H}(t) < x_+(t), \quad t\geq T(\epsilon).
	\end{align}
	On the other hand, since $H(t)<(1+\epsilon)\tilde{H}(t)$ for $t\geq T(\epsilon)$, it follows that
	\begin{equation} \label{eq.xc}
	x(t)=x(T)+H(t)-H(T)+\int_T^t f(x(s))\,ds < x(T) - H(T) + (1+\epsilon)\tilde{H}(t)+\int_T^t f(x(s))\, ds.
	\end{equation}
	Since \eqref{eq.x+c}, \eqref{eq.xc}, and $x(T)<x_+(T)$ hold, a comparison argument using the monotonicity of $f$ gives 
	$x(t)/H(t) < x_+(t)/H(t)=1+4\epsilon$ for $t \geq T(\epsilon)$. Therefore $\limsup_{t\to\infty}x(t)/H(t) \leq 1$, whence the claimed limit.  
	
	For the converse, note that $\lim_{t\to\infty}x(t)/H(t) = 1$, implies $\lim_{t \to \infty}\int_0^t f(x(s))\,ds / H(t) = 0$. Since $x(t) \geq H(t)$ for $t \geq 0$, $\int_0^t f(x(s))\,ds \geq \int_0^t f(H(s))\,ds$ for $t \geq 0$. This estimate and the last limit prove the claim. 
\end{proof}
\begin{definition}\label{defn_O-reg}
	A nonnegative measurable function $\phi$ is called $O$--regularly varying if
	\[
	0 < \liminf_{x\to\infty} \phi(\lambda x)/\phi(x) \leq \limsup_{x\to\infty} \phi(\lambda x)/\phi(x) < \infty, \mbox{ for each }\lambda>1.
	\]
\end{definition}
\noindent While Definition \ref{defn_O-reg} appears restrictive, if $\limsup_{x\to\infty}\phi(\lambda x)/\phi(x)$ is finite for some $\lambda >1$ and  $\phi$ is increasing, then $\phi$ is $O$--regularly varying \cite[Corollary 2.0.6, p.65]{bingham1989regular}. We can now state a simple corollary to Theorem \ref{x_over_H_to_1}.
\begin{corollary}
	Suppose \eqref{eq.f} and \eqref{eq.H} hold, and that $H$ is asymptotic to an increasing function $\tilde{H}$. If $f$ is $O$--regularly varying, then the following are equivalent:
	\[
	(i.)\quad \lim_{t\to\infty}\frac{\int_0^t f(H(s))\,ds}{H(t)} = 0, \qquad (ii.)\quad \lim_{t\to\infty}\frac{x(t)}{H(t)}=1.
	\]
\end{corollary}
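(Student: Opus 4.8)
The plan is to obtain both implications directly from Theorem~\ref{x_over_H_to_1}, using the $O$--regular variation of $f$ only to convert the hypothesis on $f\circ H$ appearing in $(i.)$ into the hypothesis on $f\circ(K\tilde H)$ that Theorem~\ref{x_over_H_to_1} requires. The implication $(ii.)\Rightarrow(i.)$ is then immediate: it is precisely the converse assertion of Theorem~\ref{x_over_H_to_1} and uses neither $O$--regular variation nor the existence of an increasing asymptotic equivalent of $H$.

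So the substance is $(i.)\Rightarrow(ii.)$. First I would note that $(i.)$ forces $H(t)\to\infty$: since $\tilde H$ is increasing and $H\sim\tilde H$, both converge to some $L\in(0,\infty]$ (a nonpositive $L$ being incompatible with $H\geq 0$ and $H\sim\tilde H$), and if $L$ were finite then $f(H(s))$ would be bounded below by a positive constant for all large $s$, making $\int_0^t f(H(s))\,ds$ grow at least linearly while $H(t)\to L$, contradicting $(i.)$. Next, fix any $K>1$. Since $f$ is increasing and $O$--regularly varying, $M:=\limsup_{x\to\infty}f(2Kx)/f(x)<\infty$, so there is $x_0$ with $f(2Kx)\leq(M+1)f(x)$ for $x\geq x_0$. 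Because $H\sim\tilde H$ and $H(t)\to\infty$, there is $s_1$ such that $H(s)\geq x_0$ and $\tilde H(s)\leq 2H(s)$ for all $s\geq s_1$, whence, by monotonicity of $f$, $f(K\tilde H(s))\leq f(2KH(s))\leq (M+1)f(H(s))$ for $s\geq s_1$. Integrating this bound and dividing by $\tilde H(t)$ gives, for $t\geq s_1$,
\[
\frac{\int_0^t f(K\tilde H(s))\,ds}{\tilde H(t)} \leq \frac{C}{\tilde H(t)} + (M+1)\,\frac{\int_0^t f(H(s))\,ds}{H(t)}\cdot\frac{H(t)}{\tilde H(t)},
\]
with $C=\int_0^{s_1}f(K\tilde H(s))\,ds$ (taking $\tilde H$ positive on $[0,s_1]$, which costs nothing since $\tilde H$ is only constrained up to asymptotic equivalence). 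Letting $t\to\infty$ and using $\tilde H(t)\to\infty$, $H(t)/\tilde H(t)\to 1$, and $(i.)$, the right-hand side tends to $0$. Thus $\lim_{t\to\infty}\int_0^t f(K\tilde H(s))\,ds/\tilde H(t)=0$ for this $K>1$, and Theorem~\ref{x_over_H_to_1} yields $\lim_{t\to\infty}x(t)/H(t)=1$, which is $(ii.)$.

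The main point requiring care is this reduction: one has to check that the $O$--regular variation estimate may be invoked uniformly for all large $s$ (which is exactly why the preliminary observation $H(t)\to\infty$ is needed) and that the bounded contribution of the initial interval $[0,s_1]$ becomes negligible after division by $\tilde H(t)\to\infty$. Beyond this bookkeeping I expect no genuine obstacle, everything else being a direct appeal to Theorem~\ref{x_over_H_to_1} and Definition~\ref{defn_O-reg} (together with the remark following it, which is what makes ``$f$ increasing and $O$--regularly varying'' give $\limsup_{x\to\infty}f(2Kx)/f(x)<\infty$).
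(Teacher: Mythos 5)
Your proof is correct, and it supplies exactly what the paper leaves unstated: the corollary is presented without proof, being called a ``simple corollary'' of Theorem~\ref{x_over_H_to_1}. The implication $(ii.)\Rightarrow(i.)$ is indeed just the converse assertion of Theorem~\ref{x_over_H_to_1}, and your reduction of $(i.)$ to the hypothesis of that theorem --- bounding $f(K\tilde H(s))\le f(2KH(s))\le (M+1)f(H(s))$ for $s\ge s_1$ via $O$--regular variation, monotonicity of $f$, and $\tilde H\le 2H$ eventually, then absorbing the contribution of $[0,s_1]$ by $\tilde H(t)\to\infty$ --- is the natural route, and you correctly flag the preliminary observation $H(t)\to\infty$ as what licenses applying the $O$--regular variation estimate uniformly for large $s$. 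One minor slip in the surrounding commentary only: the bound $\limsup_{x\to\infty}f(2Kx)/f(x)<\infty$ follows directly from Definition~\ref{defn_O-reg} applied with $\lambda=2K>1$, not from the remark after it (that remark gives the converse direction: one finite $\limsup$ plus monotonicity implies $O$--regular variation). This does not affect the validity of your argument.
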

\begin{examples}
Choose $f(x) = (x+e)\log(x+e)$ for $x \geq 0$. Straightforward estimation shows that $F(x) \sim \log\log(x)$ as $x \to \infty$. Let $H(t) = \exp\exp(Kt^\alpha)- e$ for $t \geq 0$, with $\alpha>0$ and $K>1$. If $\alpha \in (0,1)$, $\limsup_{t\to\infty}F(H(t))/t = 0$ and Theorem \ref{thm.lim_preserved} implies that $\lim_{t\to\infty}\log\log(x(t))/t = 1$. If $\alpha = 1$, then $\lim_{t\to\infty}F(H(t))/t = K>1$ and by Corollary \ref{thm.lim_big}, $\lim_{t\to\infty}\log\log(x(t))/t = K$. Finally, when $\alpha>1$, $\limsup_{t\to\infty}F(H(t))/t = \infty$ and Theorems \ref{thm.lim_preserved} and \ref{thm.limsup_big} do not apply. However, $\lim_{t\to\infty}\int_0^t f(H(s))\,ds/H(t) = 0$ if $\alpha>1$, so Theorem \ref{x_over_H_to_1} implies $x(t) \sim H(t)$ as $t\to\infty$.
\end{examples}
\section{Fluctuation results}
Finally, we sketch a result which applies when $H$ fluctuates rather than grows, but the size of the large fluctuations is known. We assume that the fluctuations are large by imposing the conditions of Theorem \ref{x_over_H_to_1} on a growing function $\gamma$ which tracks the largest fluctuation size, and impose symmetry in the following manner:
\begin{equation} \label{eq.symm}
\limsup_{t\to\infty} \frac{H(t)}{\gamma(t)}=1, \quad \liminf_{t\to\infty} \frac{H(t)}{\gamma(t)}=-1, 
\quad \lim_{|x|\to \infty } \frac{|f(x)|}{\varphi(|x|)}=1.
\end{equation}
$\varphi$ satisfies \eqref{eq.f} and obeys $\int_1^\infty du/\varphi(u)=+\infty$, so it plays the role of $f$ in earlier results. 
We can prove analogues of Theorems~\ref{thm.lim_preserved} and \ref{thm.limsup_big}, with ``small'' $\gamma$, but here $\gamma$ is ``large'' relative to the nonlinearity; more precisely:
\begin{equation} \label{eq.gamma}
\text{There exists $K>1$ such that } \lim_{t\to\infty} \frac{\int_0^t \varphi(K\gamma(s))\,ds}{\gamma(t)}=0, \quad \text{$\gamma\in C^1((0,\infty),(0,\infty))$ is increasing}. 
\end{equation}
The technical condition $\gamma\in C^1(0,\infty)$ simplifies the proof of the next result, which is an analogue of Theorem~\ref{x_over_H_to_1}.
\begin{theorem}\label{thm.gamma}
	If \eqref{eq.symm} and \eqref{eq.gamma} hold with $\varphi$ satisfying \eqref{eq.f} and $f \in C(\mathbb{R};\mathbb{R})$, then
	\[
	\lim_{t\to\infty} \frac{x(t)-H(t)}{\gamma(t)}=0, \quad \limsup_{t\to\infty} \frac{x(t)}{\gamma(t)}=1, \quad \liminf_{t\to\infty} \frac{x(t)}{\gamma(t)}=-1.
	\]
\end{theorem}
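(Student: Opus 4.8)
The three assertions are linked: writing $x/\gamma=(x-H)/\gamma+H/\gamma$ and using the first two relations in \eqref{eq.symm} ($\limsup_{t\to\infty}H(t)/\gamma(t)=1$, $\liminf_{t\to\infty}H(t)/\gamma(t)=-1$), once we prove $\lim_{t\to\infty}(x(t)-H(t))/\gamma(t)=0$ the other two limits follow at once, since the $\limsup$ (resp.\ $\liminf$) of a sum with a vanishing summand is unchanged. So the plan is to prove $z(t)/\gamma(t)\to 0$, where $z:=x-H$ obeys $z(0)=\psi$ and $z'(t)=f(x(t))$. Note that \eqref{eq.gamma} forces $\gamma(t)\to\infty$, and — by the remark preceding Theorem~\ref{x_over_H_to_1} together with the fact $F_\varphi(\eta t)/F_\varphi(t)\to 1$ for each $\eta>0$ established in the proof of Theorem~\ref{x_over_H_to_1} (both applied with $\varphi$ in place of $f$) — it also forces $F_\varphi(\gamma(t))/t\to\infty$, where $F_\varphi(x)=\int_1^x du/\varphi(u)$.

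\emph{A priori bound.} I would first show $\liminf_{t\to\infty}|x(t)|/\gamma(t)\le 1$. If not, then $|x(t)|/\gamma(t)\to\infty$; since $\gamma\to\infty$ and $x$ is continuous, $x$ is eventually of one sign, say $x(t)\to+\infty$. Then $|H|\le(1+\epsilon)\gamma=o(x)$, so $z\sim x$, and hence $x(t)<(1+2\epsilon)J(t)$ eventually, where $J(t):=\int_0^t f(x(s))\,ds=z(t)-\psi$. From $J'(t)=f(x(t))\le(1+\epsilon)\varphi(x(t))\le(1+\epsilon)\varphi((1+2\epsilon)J(t))$ and asymptotic integration, exactly as in the first part of the proof of Theorem~\ref{x_over_H_to_1}, one obtains $\limsup_{t\to\infty}F_\varphi(x(t))/t\le 1$; but $x(t)>\gamma(t)$ eventually gives $F_\varphi(\gamma(t))/t\le F_\varphi(x(t))/t$, contradicting $F_\varphi(\gamma(t))/t\to\infty$. (The case $x(t)\to-\infty$ is symmetric.) Hence there are arbitrarily large $t$ with $|x(t)|/\gamma(t)$ bounded.

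\emph{Comparison and bootstrap.} Fix $\epsilon\in(0,1)$ with $1+4\epsilon<K$; by \eqref{eq.symm}, $|H(t)|\le(1+\epsilon)\gamma(t)$ for $t\ge T_1(\epsilon)$, and $\lim_{|x|\to\infty}|f(x)|/\varphi(|x|)=1$ with $\varphi$ increasing gives, for $s$ large, $f((1+4\epsilon)\gamma(s))\le(1+\epsilon)\varphi((1+4\epsilon)\gamma(s))\le(1+\epsilon)\varphi(K\gamma(s))$, so $\int_T^t f((1+4\epsilon)\gamma(s))\,ds\le(1+\epsilon)\int_0^t\varphi(K\gamma(s))\,ds=o(\gamma(t))$ by \eqref{eq.gamma}. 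Now compare $x$ with the supersolution $x_+(t)=(1+4\epsilon)\gamma(t)$ in the integral identity $x(t)=x(T)+H(t)-H(T)+\int_T^t f(x(s))\,ds\le z(T)+(1+\epsilon)\gamma(t)+\int_T^t f(x(s))\,ds$, as in the final paragraph of the proof of Theorem~\ref{x_over_H_to_1}: choosing $T\ge T_1(\epsilon)$ large enough that $\int_T^t f(x_+(s))\,ds<\tfrac{\epsilon}{2}\gamma(t)$ for all $t\ge T$, one needs only $z(T)\le\tfrac{5\epsilon}{2}\gamma(T)$ at the starting time (the initial inequality $x(T)<x_+(T)$ then holds automatically) to conclude $x(t)<x_+(t)$ for $t\ge T$, hence $\limsup_{t\to\infty}x(t)/\gamma(t)\le 1+4\epsilon$. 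The symmetric argument with $x_-(t)=-(1+4\epsilon)\gamma(t)$, using $f(-(1+4\epsilon)\gamma(s))\ge-(1+\epsilon)\varphi(K\gamma(s))$, gives $\liminf_{t\to\infty}x(t)/\gamma(t)\ge-(1+4\epsilon)$ from any starting time with $z(T)\ge-\tfrac{5\epsilon}{2}\gamma(T)$. So, if one can produce arbitrarily large times at which $|z|/\gamma$ is small, then letting $\epsilon\to0^+$ yields $\limsup_{t\to\infty}|x(t)|/\gamma(t)\le1$. Once $|x(t)|\le(1+\delta)\gamma(t)$ eventually for every $\delta\in(0,K-1)$, we get $|f(x(t))|\le(1+\delta)\varphi((1+\delta)\gamma(t))\le(1+\delta)\varphi(K\gamma(t))$ for $t$ large; integrating $z(t)=\psi+\int_0^t f(x(s))\,ds$, separating off the bounded contribution over a fixed initial interval, and dividing by $\gamma(t)\to\infty$, \eqref{eq.gamma} gives $z(t)/\gamma(t)\to0$, and then \eqref{eq.symm} delivers the three stated limits.

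\emph{The main obstacle} is producing starting times $T$ with $z(T)=o(\gamma(T))$ for the comparison step. In Theorem~\ref{x_over_H_to_1} the monotone forcing made this free: a time where $x\approx\tilde H$ is also one where $H\approx\tilde H$, so $z=x-H$ is already $o(\tilde H)$ there. Here the times supplied by the a priori bound (where $|x|\lesssim\gamma$) need not be times where $H$ is near an extremum, and then $z=x-H$ can be of exact order $\gamma$, which a comparison function constrained to stay below $K\gamma$ (we only know $K>1$) cannot absorb. I expect this to be handled by showing directly that $\liminf_{t\to\infty}z(t)/\gamma(t)\le0\le\limsup_{t\to\infty}z(t)/\gamma(t)$: since $z'=f(x)=f(z+H)$ and, wherever $z\gg\gamma$, $z'\le(1+\epsilon)\varphi(2z)$, the quantity $z$ cannot outrun the autonomous rate of growth governed by $\varphi$, whereas differentiating \eqref{eq.gamma} — which is where the hypothesis $\gamma\in C^1$ is used — forces $\gamma'(t)\gg\varphi(K\gamma(t))$, so $\gamma$ eventually dominates that rate; combined with the oscillation of $H$ down to $-\gamma$ and up to $\gamma$, which repeatedly drives $x$ across the relevant band, this should give the two required inequalities. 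Making this rigorous — in particular controlling $x$ on the transient intervals where it is moderate and the relation $|f|\sim\varphi$ carries no information — is the technical heart of the argument.
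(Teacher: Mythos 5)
Your reduction to proving $\lim_{t\to\infty}(x(t)-H(t))/\gamma(t)=0$ is correct and matches the paper, and your diagnosis of the difficulty is accurate — but the difficulty you identify is a genuine gap in your argument that you do not close, and in fact the paper avoids it entirely by a different device that you did not find. A comparison function of the form $x_+(t)=B\gamma(t)$ requires a starting time $T$ at which $x(T)<x_+(T)$ \emph{and} $z(T)=x(T)-H(T)$ is already $o(\gamma(T))$, and, as you observe, the a priori bound $\liminf|x|/\gamma\le 1$ only produces times where $|x|\lesssim\gamma$, not times where $z$ is small. Your closing sketch (``differentiating \eqref{eq.gamma} forces $\gamma'(t)\gg\varphi(K\gamma(t))$'') is also not justified: \eqref{eq.gamma} is an averaged condition on $\int_0^t\varphi(K\gamma(s))\,ds/\gamma(t)$ and does not imply a pointwise bound on $\gamma'$.

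The paper's route is different. First, continuity of $f$ on all of $\mathbb{R}$ together with $\lim_{|x|\to\infty}|f(x)|/\varphi(|x|)=1$ gives a \emph{global} bound $|f(x)|\le A(\epsilon)+(1+\epsilon)\varphi(|x|)$. Integrating \eqref{eq.ODE} from an arbitrary (large) $T$ and absorbing the additive constants using $t/\gamma(t)\to 0$ yields the scalar integral inequality
\[
|x(t)|\le C(\epsilon)+(1+2\epsilon)\gamma(t)+(1+\epsilon)\int_T^t\varphi(|x(s)|)\,ds,\quad t\ge T.
\]
The crucial step you are missing is that, instead of comparing with $B\gamma$, one lets $x_+$ solve the \emph{forced} ODE $x_+'(t)=(1+2\epsilon)\gamma'(t)+(1+\epsilon)\varphi(x_+(t))$ with $x_+(T)=1+C(\epsilon)+(1+2\epsilon)\gamma(T)$ — an initial condition that trivially dominates $|x(T)|$, so no ``good starting time'' is needed at all (this is where $\gamma\in C^1$ is actually used). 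Then $|x(t)|<x_+(t)$ for $t\ge T$. The time-shifted $\tilde x(t)=x_+(t+T)$ obeys precisely an equation of the form \eqref{eq.ODE} with nonlinearity $(1+\epsilon)\varphi$ and a nonnegative, monotone forcing primitive $\tilde H(t)=(1+2\epsilon)(\gamma(t+T)-\gamma(T))$. Checking that $\tilde H$ satisfies the hypothesis of Theorem~\ref{x_over_H_to_1} (with a suitable $K'\in(1,K/(1+2\epsilon))$) and applying that theorem gives $x_+(t)/\gamma(t)\to 1+2\epsilon$, hence $\limsup_{t\to\infty}|x(t)|/\gamma(t)\le 1+2\epsilon$; let $\epsilon\to 0^+$. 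With $|x(t)|\le(1+\epsilon)\gamma(t)$ eventually (for $\epsilon<K-1$), the final integration step you wrote out is essentially the paper's and gives $I(t)/\gamma(t)\to 0$, hence the claim. In short: the bootstrap you propose is the right final step, but the $\limsup$ bound must be obtained by recursively invoking Theorem~\ref{x_over_H_to_1} on a constructed comparison ODE, not by direct comparison with a multiple of $\gamma$.
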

\begin{proof}
	The second and third limits are an easy consequence of the first limit, and the first two limits in \eqref{eq.symm}. It remains therefore to prove the first limit. For every $\epsilon>0$ there is $A(\epsilon)>0$ such that $|f(x)|\leq A(\epsilon)+(1+\epsilon)\varphi(|x|)$ for all $x\in\mathbb{R}$. For every $\epsilon>0$, there is $T_1(\epsilon)$ such that 
	$|H(t)|\leq (1+\epsilon)\gamma(t)$ for $t\geq T_1(\epsilon)$. Since $\gamma(t)\to\infty$ and $\varphi$ is increasing, estimating the integral in \eqref{eq.gamma} gives $t/\gamma(t)\to 0$ as $t\to\infty$. Hence there is $T_2(\epsilon)>0$ such that $A(\epsilon)t<\epsilon \gamma(t)$ for $t\geq T_2(\epsilon)$. Let $T=\max(T_1,T_2)$, $C(\epsilon)=|x(T)|+|H(T)|$, and integrate \eqref{eq.ODE} to obtain 
	\[
	x(t)=x(T)+H(t)-H(T)+\int_T^t f(x(s))\,ds,\quad t\geq T(\epsilon).
	\]  
	Using the estimates above, we arrive at the inequality
	$
	|x(t)|\leq C(\epsilon)+(1+2\epsilon)\gamma(t)+(1+\epsilon)\int_T^t \varphi(|x(s)|)\,ds, 
	$ for $t\geq T.$ Since $\gamma \in C^1$, we may define $x_+$ to be the solution of
	\[
	x_+'(t)= (1+2\epsilon)\gamma'(t)+(1+\epsilon) \varphi(x_+(t)), \quad t\geq T;\quad x_+(T)=1+C(\epsilon)+(1+2\epsilon)\gamma(T).
	\]
	Then $|x(t)|< x_+(t)$ for $t\geq T$. 
	Let $\tilde{x}(t)=x_+(t+T)$ for $t\geq 0$ and define $\tilde{h}(t)= (1+2\epsilon)\gamma'(t+T)$ for $t\geq 0$. Hence
	\[\tilde{x}'(t)= \tilde{h}(t)+(1+\epsilon)\varphi(\tilde{x}(t)), \quad t \geq 0; \quad \tilde{x}(0)=1+C(\epsilon)+(1+2\epsilon)\gamma(T).\]
	Then, applying Theorem~\ref{x_over_H_to_1} to $\tilde{x}$ we get $\tilde{x}(t)/\int_0^t \tilde{h}(s)\,ds\to 1$ 
	as $t\to\infty$. This leads quickly to $x_+(t)/\gamma(t)\to 1+2\epsilon$ as $t\to\infty$. Therefore
	$\limsup_{t\to\infty} |x(t)|/\gamma(t)\leq 1$. Thus, for every $\epsilon<K-1$, there is $T_3(\epsilon)>0$ such that $|x(t)|\leq (1+\epsilon)\gamma(t)$ 
	for $t\geq T_3(\epsilon)$. Define $I(t)=\int_0^t f(x(s))\,ds$ for $t\geq 0$. Then, for $t\geq T_3$, 
	\begin{align*}
	|I(t)|&\leq |I(T_3)|+\int_{T_3}^t \{A(\epsilon)+(1+\epsilon)\varphi(|x(s)|)\}\,ds
	\leq |I(T_3)|+A(\epsilon) t +(1+\epsilon)\int_{0}^t\varphi(K\gamma(s))\,ds.
	\end{align*}
	Now dividing by $\gamma(t)$, the last term on the righthand side tends to $0$ as $t\to\infty$ by \eqref{eq.gamma}, as does the second term since $t/\gamma(t)\to 0$ as $t\to\infty$. Therefore $\lim_{t\to\infty}I(t)/\gamma(t)= 0$. Since $x(t)-H(t)=x(0)+I(t)$ for $t\geq 0$, the first limit in the claim follows, which completes the proof.  
\end{proof}
Using Theorem \ref{thm.gamma} with $H(t)=\int_0^t \sigma(s)\,dB(s)$ in \eqref{eq.SDE} and $\gamma = \Sigma$, where
\begin{equation}\label{eq.Sigma}
\Sigma(t)= \sqrt{ 2 \left(\int_0^t \sigma^2 (s)ds\right) \log\log\left( \int_0^t \sigma^2 (s)ds \right)}, \quad t \geq 0,
\end{equation}
the law of the iterated logarithm for continuous martingales can be used to show the following fluctuation result regarding solutions to \eqref{eq.SDE} (see \cite[Ch. V, Ex. 1.15]{revuzandyor}).
\begin{corollary}\label{corollary.SDE}
Let $X$ be the unique strong solution to \eqref{eq.SDE}, $f \in C(\mathbb{R};\mathbb{R})$, and $\sigma \notin L^2([0,\infty);\mathbb{R})$. Suppose that \eqref{eq.symm} and \eqref{eq.gamma} hold with $\gamma=\Sigma$, as defined by \eqref{eq.Sigma}, and $\varphi$ satisfies \eqref{eq.f}. Then, with probability one, $X$ obeys
\[
\liminf_{t\to\infty}\frac{X(t)}{\Sigma(t)} = -1, \quad \limsup_{t\to\infty}\frac{X(t)}{\Sigma(t)}=1.
\]
\end{corollary}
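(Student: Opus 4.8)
The plan is to obtain Corollary \ref{corollary.SDE} as a pathwise application of Theorem \ref{thm.gamma}, with the continuous martingale $H(t):=\int_0^t \sigma(s)\,dB(s)$ playing the role of the forcing in the integral form of \eqref{eq.SDE}; the first two limits in \eqref{eq.symm} will be supplied almost surely by the law of the iterated logarithm for continuous martingales, while the remaining hypotheses are the deterministic standing assumptions of the corollary.

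First I would record the elementary facts about $H$: it is a continuous local martingale with $H(0)=0$ and quadratic variation $\langle H\rangle_t=\int_0^t\sigma^2(s)\,ds$, and since $\sigma\notin L^2([0,\infty);\mathbb{R})$ we have $\langle H\rangle_t\uparrow+\infty$ as $t\to\infty$. By the iterated logarithm law for continuous local martingales with divergent quadratic variation — for instance via the Dambis--Dubins--Schwarz time change and the classical law for Brownian motion, see \cite[Ch.~V, Ex.~1.15]{revuzandyor} — we obtain, with probability one,
\[
\limsup_{t\to\infty}\frac{H(t)}{\Sigma(t)}=1, \qquad\text{and, applying the same statement to }-H,\qquad \liminf_{t\to\infty}\frac{H(t)}{\Sigma(t)}=-1,
\]
where $\Sigma$ is the function defined in \eqref{eq.Sigma}; note $\langle H\rangle_t\to\infty$ guarantees $\Sigma(t)\to\infty$ and that $\log\log(\cdot)$ is eventually well defined, while $\Sigma\in C^1((0,\infty),(0,\infty))$ and increasing is assumed as part of \eqref{eq.gamma}.

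Next I would observe that, although Theorem \ref{thm.gamma} is phrased in terms of \eqref{eq.ODE}, its proof uses only the integral identity $x(t)=x(T)+\bigl(H(t)-H(T)\bigr)+\int_T^t f(x(s))\,ds$, the continuity of $H$, and the hypotheses \eqref{eq.symm}--\eqref{eq.gamma}; it never differentiates $H$ and never uses $H\ge 0$. Hence the conclusion of Theorem \ref{thm.gamma} holds for any continuous forcing obeying those hypotheses. I would then fix a single probability-one event $\Omega_0$ on which (a) $t\mapsto H(t,\omega)$ is continuous, (b) the two iterated-logarithm limits above hold, and (c) $t\mapsto X(t,\omega)$ is the (global) solution of $X(t)=\psi+\int_0^t f(X(s))\,ds+H(t)$ on $[0,\infty)$. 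For each $\omega\in\Omega_0$ the first two limits in \eqref{eq.symm} hold with $\gamma=\Sigma$, the third limit in \eqref{eq.symm} and all of \eqref{eq.gamma} are deterministic and assumed, and $\varphi$ obeys \eqref{eq.f} with $f\in C(\mathbb{R};\mathbb{R})$; applying Theorem \ref{thm.gamma} pathwise on $\Omega_0$ then gives, for every $\omega\in\Omega_0$, the three limits $\lim_{t\to\infty}(X(t)-H(t))/\Sigma(t)=0$, $\limsup_{t\to\infty}X(t)/\Sigma(t)=1$ and $\liminf_{t\to\infty}X(t)/\Sigma(t)=-1$, the last two being the assertion.

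The heavy analytic lifting is already contained in Theorem \ref{thm.gamma}, so the main work here is organisational: (i) confirming that a single probability-one event carries all of the pathwise properties needed, so that the deterministic Theorem \ref{thm.gamma} may legitimately be invoked $\omega$ by $\omega$; (ii) making precise the reading of ``\eqref{eq.symm} and \eqref{eq.gamma} hold with $\gamma=\Sigma$'', namely that only the deterministic parts are assumed outright and the first two limits in \eqref{eq.symm} are an almost sure consequence of the martingale law of the iterated logarithm; and (iii) recording that the solution is genuinely global on $[0,\infty)$, which follows because $|f(x)|\le 2\varphi(|x|)$ for $|x|$ large together with $\int_1^\infty du/\varphi(u)=+\infty$ rules out finite-time explosion of the comparison dynamics, while the almost surely locally bounded stochastic term cannot induce blow-up. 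If instead one wished to allow random $\sigma$, the only change is that $\Sigma$ becomes random and the hypotheses and conclusion are then almost sure statements; the argument is otherwise identical.
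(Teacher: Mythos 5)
Your proposal is correct and follows essentially the same route the paper intends: the paper itself only offers the one-sentence sketch preceding the corollary (apply Theorem \ref{thm.gamma} with the stochastic integral as the forcing $H$, take $\gamma=\Sigma$, and invoke the martingale law of the iterated logarithm for the two symmetric limits in \eqref{eq.symm}), and you have simply made that sketch precise. Your added observations — that the proof of Theorem \ref{thm.gamma} only uses the integral identity and the continuity of $H$ (never its differentiability nor its sign), that one should fix a single full-measure event on which all the needed pathwise properties hold, and that global existence on $[0,\infty)$ follows pathwise from $|f(x)|\le A(\epsilon)+(1+\epsilon)\varphi(|x|)$ together with $\int_1^\infty du/\varphi(u)=+\infty$ and local boundedness of $H$ — are exactly the details one would want to see spelled out, and none of them departs from the paper's approach.
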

\bibliography{superlinear_ode_refs}

\begin{thebibliography}{10}

\bibitem{agarwal2005generalization}
R.~P. Agarwal, S.~Deng, and W.~Zhang.
\newblock Generalization of a retarded {G}ronwall--like inequality and its
  applications.
\newblock {\em Applied Mathematics and Computation}, 165(3):599--612, 2005.

\bibitem{appleby2014necessary}
J.~A.~D. Appleby and D.~D. Patterson.
\newblock On necessary and sufficient conditions for preserving convergence
  rates to equilibrium in deterministically and stochastically perturbed
  differential equations with regularly varying nonlinearity.
\newblock In {\em Recent Advances in Delay Differential and Difference
  Equations}, pages 1--85. Springer, 2014.

\bibitem{app_patt_ejde}
J.~A.~D. Appleby and D.~D. Patterson.
\newblock Hartman--{W}intner growth results for sublinear functional
  differential equations.
\newblock {\em Electron. J. Differential Equations}, 2017(21):1--45, 2017.

\bibitem{bihari1956generalization}
I.~Bihari.
\newblock A generalization of a lemma of {B}ellman and its application to
  uniqueness problems of differential equations.
\newblock {\em Acta Mathematica Hungarica}, 7(1):81--94, 1956.

\bibitem{bingham1989regular}
N.~H. Bingham, C.~M. Goldie, and J.~L. Teugels.
\newblock {\em Regular variation}, volume~27.
\newblock Cambridge University Press, 1989.

\bibitem{brunner2012blow}
H.~Brunner and Z.~Yang.
\newblock Blow-up behavior of {H}ammerstein--type {V}olterra integral
  equations.
\newblock {\em J. Integral Equations Appl}, 24(4):487, 2012.

\bibitem{MR1081393}
A.~Constantin.
\newblock A {G}ronwall-like inequality and its applications.
\newblock {\em Atti Accad. Naz. Lincei Cl. Sci. Fis. Mat. Natur. Rend. Lincei
  (9) Mat. Appl.}, 1(2):111--115, 1990.

\bibitem{MR1177923}
A.~Constantin.
\newblock On pointwise estimates for solutions of {V}olterra integral
  equations.
\newblock {\em Boll. Un. Mat. Ital. A (7)}, 6(2):215--225, 1992.

\bibitem{H}
P.~Hartman.
\newblock {\em Ordinary differential equations}.
\newblock SIAM, Philadelphia, second edition, 2002.

\bibitem{malolepszy2008blow}
T.~Ma{\l}olepszy and W.~Okrasi{\'n}ski.
\newblock Blow-up conditions for nonlinear {V}olterra integral equations with
  power nonlinearity.
\newblock {\em Applied Mathematics Letters}, 21(3):307--312, 2008.

\bibitem{MR0372135}
B.~G. Pachpatte.
\newblock On some generalizations of {B}ellman's lemma.
\newblock {\em J. Math. Anal. Appl.}, 51:141--150, 1975.

\bibitem{MR1487077}
B.~G. Pachpatte.
\newblock {\em Inequalities for Differential and Integral Equations}, volume
  197 of {\em Mathematics in Science and Engineering}.
\newblock Academic Press, Inc., San Diego, CA, 1998.

\bibitem{pinto1990integral}
M.~Pinto.
\newblock Integral inequalities of bihari-type and applications.
\newblock {\em Funkcialaj Ekvacioj}, 33(3):387--403, 1990.

\bibitem{revuzandyor}
D.~Revuz and M.~Yor.
\newblock {\em Continuous martingales and Brownian motion}, volume 293.
\newblock Springer Science \& Business Media, 1999.

\bibitem{roberts2007recent}
C.~A. Roberts.
\newblock Recent results on blow-up and quenching for nonlinear {V}olterra
  equations.
\newblock {\em Journal of Computational and Applied Mathematics},
  205(2):736--743, 2007.

\end{thebibliography}
\bibliographystyle{abbrv}
\end{document}